\providecommand\@dotsep{5}
\def\listtodoname{List of Todos}
\def\listoftodos{\@starttoc{tdo}\listtodoname}
\numberwithin{equation}{section}
\newtheorem{theorem}{Theorem}[section]
\newtheorem{lemma}[theorem]{Lemma}
\title[Strauss' and Lions' type results in $BV(\mathbb{R}^N)$]
{Strauss' and Lions' type results in $BV(\mathbb{R}^N)$ with an application to 1-Laplacian problem}
\author[G. M. Figueiredo]{Giovany M. Figueiredo}
\author[M. Pimenta]{Marcos T. O. Pimenta}
\address[G. M. Figueiredo]{\newline\indent Faculdade de Matem\'atica
\newline\indent 
Universidade Federal do Par\'a
\newline\indent
66075-110, Bel\'em - PA, Brazil}
\email{\href{mailto:giovany@ufpa.br}{giovany@ufpa.br}}
\address[M. T. O. Pimenta]{\newline\indent Departamento de Matem\'atica e Computa\c{c}\~ao
\newline\indent 
Faculdade de Ci\^encias e Tecnologia
\newline\indent
UNESP - Universidade Estadual Paulista
\newline\indent
19060-900, Presidente Prudente - SP - Brazil}
\email{\href{mailto:pimenta@fct.unesp.br}{pimenta@fct.unesp.br}}
\thanks{Giovany M. Figueiredo was partially
supported by  FAPESP and CNPq, Brazil. Marcos T.O. Pimenta was supported by Fapesp and CNPq, Brazil. }
\subjclass[2010]{35J62, 35J93}
\keywords{Bounded variation functions, 1-Laplacian operator, compactness with symmetry}
\begin{document}

\maketitle
\begin{abstract}

In this work we state and prove versions of some classical results, in the framework of functionals defined in the space of functions of bounded variation in $\mathbb{R}^N$. More precisely, we present versions of the Radial Lemma of Strauss, the compactness of the embeddings of the space of radially symmetric functions of $BV(\mathbb{R}^N)$ in some Lebesgue spaces and also a version of the Lions Lemma, proved in his celebrated paper of 1984. As an application, we state and prove a version of the Mountain Pass Theorem without the Palais-Smale condition in order to get existence of a ground-state bounded variation solution of a quasilinear elliptic problem involving the $1-$Laplacian operator in $\mathbb{R}^N$. This seems to be the very first work dealing with stationary problems involving this operator in the whole space.

\end{abstract}
\maketitle

\section{Introduction and some abstract results}

\hspace{.5cm} 

When dealing with semilinear elliptic equations in $\mathbb{R}^N$, the lack of compactness is a problem to be considered. In general, what people are used to do is to impose some symmetry on the problem in order to recover the compactness of the embeddings of the Sobolev space into Lebesgue spaces. In this procedure at least two results are absolutely essential: a kind of Strauss Radial Lemma and a version of the Symmetric Criticality Principle of Palais.

Another very useful tool, mainly when symmetry is broken, is the very known Lions' Lemma, which has been settled by Lions in the celebrated paper \cite{Lions} and widely used since them.

As regards quasilinear problems, depending on some features of the differential operator to be considered, it can be necessary to deal with it in the space of functions of bounded variation, $BV(\mathbb{R}^N)$. This is the case when dealing with the mean-curvature operator or with the $1-$Laplacian operator, a highly singular version of the usual $p-$Laplacian operator with $p = 1$. However, the space $BV(\mathbb{R}^N)$, which is going to be precisely defined later on, hasn't a crucial property that the most part of the Sobolev spaces has, the reflexivity. Indeed, the dual of $BV(\mathbb{R}^N)$ is not well known yet. This lack of reflexivity becomes a very difficult task to find critical points of functionals defined in this space and, as a consequence, we can see few or even no work dealing with elliptic problems in $\mathbb{R}^N$ which are normally modeled in this space. In fact, in general reflexivity is used since the weak limits of sequences, which can be minimizing, Palais-Smale, and so on, are the candidates to be weak solutions of the problems.

In this work, to study a quasilinear problem involving the $1-$Laplacian operator in $\mathbb{R}^N$, in order to deal at once with both, the lack of compactness of the embeddings of $BV(\mathbb{R}^N)$ into Lebesgue spaces and with the lack of reflexivity, we state and prove versions in $BV(\mathbb{R}^N)$ of some classical results, like Lions' Lemma and the compactness of embeddings of a subspace of $BV(\mathbb{R}^N)$ into $L^q(\mathbb{R}^N)$. To be more precise, we prove that $BV_{rad}(\mathbb{R}^N)$, the space of functions in $BV(\mathbb{R}^N)$ which are radially symmetric, is compactly embedded into $L^q(\mathbb{R}^N)$, for $\displaystyle 1 < q < 1^*$, where $\displaystyle 1^* = \frac{N}{N-1}$. Such results are stated beneath:

\begin{theorem}[Lions' Lemma in $BV(\mathbb{R}^N)$]
Suppose there exist $R > 0$, $1 \leq q < 1^*$ and a bounded sequence $(u_n)$ in $BV(\mathbb{R}^N)$ such that
$$
\sup_{y \in \mathbb{R}^N}\int_{B_R(y)}|u_n|^qdx \to 0, \quad \mbox{as $n \to \infty$.}
$$
Then $u_n \to 0$ in $L^s(\mathbb{R}^N)$ for all $s \in (1,1^*)$.
\label{lionslemma}
\end{theorem}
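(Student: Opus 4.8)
The plan is to mimic the classical proof of Lions' Lemma, adapting the interpolation and covering argument to the $BV$ setting, where the key difference is that instead of using the gradient $\nabla u_n \in L^1$ we must control the total variation measure $|Du_n|$ on balls. First I would recall that it suffices to prove convergence for a single $s \in (1,1^*)$ and then extract the full range $(1,1^*)$ by interpolation between the vanishing $L^s$-norm and the uniform boundedness in $L^1$ and $L^{1^*}$ (the latter coming from the continuous embedding $BV(\mathbb{R}^N) \hookrightarrow L^{1^*}(\mathbb{R}^N)$, which holds since the sequence is bounded in $BV$). So the real work concentrates on one well-chosen exponent.

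The core step is a local estimate on each ball $B_R(y)$. On a ball, by the Sobolev--Poincar\'e inequality for $BV$ functions one has, for a suitable exponent, a bound of the form
\begin{equation*}
\left(\int_{B_R(y)}|u_n|^{1^*}\,dx\right)^{1/1^*} \leq C\left(\int_{B_R(y)}|Du_n| + \int_{B_R(y)}|u_n|\,dx\right),
\end{equation*}
and then I would interpolate the $L^t$-norm on $B_R(y)$ (for an appropriate $t$ strictly between $q$ and $1^*$, or between $q$ and a slightly larger exponent) between $L^q(B_R(y))$ and $L^{1^*}(B_R(y))$. Writing $\|u_n\|_{L^t(B_R(y))} \leq \|u_n\|_{L^q(B_R(y))}^{\theta}\|u_n\|_{L^{1^*}(B_R(y))}^{1-\theta}$ with the right $\theta \in (0,1)$, and raising to a power chosen so that the sum over a covering behaves well, one obtains
\begin{equation*}
\int_{B_R(y)}|u_n|^t\,dx \leq C\left(\sup_{z}\int_{B_R(z)}|u_n|^q\,dx\right)^{\beta}\left(\int_{B_R(y)}|Du_n| + \int_{B_R(y)}|u_n|\,dx\right)^{\gamma}
\end{equation*}
for suitable positive exponents $\beta,\gamma$ with $\gamma$ arranged to equal $1$ (or at most $1$) after optimizing the interpolation, which is exactly the point where the admissible range $q < 1^*$ is used.

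Next I would cover $\mathbb{R}^N$ by balls $B_R(y_i)$ with finite overlap (bounded by a dimensional constant depending only on $N$), sum the local estimates over $i$, and use that $\sum_i \int_{B_R(y_i)}|Du_n| \leq C_N |Du_n|(\mathbb{R}^N)$ and likewise $\sum_i \int_{B_R(y_i)}|u_n|\,dx \leq C_N \|u_n\|_{L^1(\mathbb{R}^N)}$, both uniformly bounded by the $BV$-boundedness of $(u_n)$. This yields
\begin{equation*}
\|u_n\|_{L^t(\mathbb{R}^N)}^t \leq C\left(\sup_{y \in \mathbb{R}^N}\int_{B_R(y)}|u_n|^q\,dx\right)^{\beta} \to 0,
\end{equation*}
so $u_n \to 0$ in $L^t(\mathbb{R}^N)$; a final interpolation against the uniform $L^1$ and $L^{1^*}$ bounds upgrades this to convergence in $L^s$ for every $s \in (1,1^*)$.

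The main obstacle I anticipate is purely technical rather than conceptual: keeping the bookkeeping of exponents straight so that after interpolation the power of the total-variation term is exactly $1$ (so that finite-overlap summation does not blow up), which forces a careful choice of the intermediate exponent $t$ and of the interpolation parameter $\theta$ in terms of $q$ and $N$; the constraint $q < 1^*$ is precisely what makes such a choice possible. A secondary point requiring a little care is that $|Du_n|$ is a measure, not an $L^1$ function, so the Sobolev--Poincar\'e inequality on balls and the additivity over the covering must be invoked in their measure-theoretic form, but these are standard facts about $BV(\mathbb{R}^N)$.
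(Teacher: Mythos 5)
Your proposal is correct and follows essentially the same route as the paper's proof: interpolate the $L^s$ (or $L^t$) norm on each ball between $L^q$ and $L^{1^*}$ using the $BV\hookrightarrow L^{1^*}$ embedding, sum over a finite-overlap covering of $\mathbb{R}^N$, and then interpolate against the uniform $L^1$ and $L^{1^*}$ bounds to reach the whole range $s\in(1,1^*)$. Your insistence on normalizing the intermediate exponent so that the $BV$-norm appears to the power exactly $1$ is in fact the clean way to make the covering summation rigorous (the paper's displayed identity $\sum_n(\cdot)^{\theta s}=(\sum_n\cdot)^{\theta s}$ is only literally valid in that case), so the bookkeeping you flag is handled correctly.
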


\begin{theorem}
Let  $BV_{rad}(\mathbb{R}^N) = \{u \in BV(\mathbb{R}^N); \, u(x) = u(|x|)\}$. Then the embedding below is compact
$$BV_{rad}(\mathbb{R}^N) \hookrightarrow L^q(\mathbb{R}^N), \quad \mbox{for $1 < q < 1^*$.}$$
\label{theorem1}
\end{theorem}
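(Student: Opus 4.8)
The plan is to derive the compactness from the Lions-type Lemma (Theorem \ref{lionslemma}), after establishing a Strauss-type radial decay estimate in $BV(\mathbb{R}^N)$. The first step is to prove (or invoke) that there is a constant $C=C(N)>0$ such that, for every $u\in BV_{rad}(\mathbb{R}^N)$,
$$|u(x)|\le C\,|x|^{-(N-1)}\,|Du|(\mathbb{R}^N)\qquad\text{for a.e. }x\in\mathbb{R}^N\setminus\{0\}.$$
For $u\in C_c^\infty(\mathbb{R}^N)$ radial this follows by writing $u(r)=-\int_r^{\infty}u'(t)\,dt$, inserting the weight $t^{N-1}t^{-(N-1)}$, using $t^{-(N-1)}\le r^{-(N-1)}$ for $t\ge r$, and recalling $\int_{\mathbb{R}^N}|\nabla u|\,dx=\omega_{N-1}\int_0^\infty|u'(t)|t^{N-1}\,dt$ with $\omega_{N-1}=|\mathbb{S}^{N-1}|$; the general case follows by mollification, which preserves radial symmetry, together with $|Du_\varepsilon|(\mathbb{R}^N)\to|Du|(\mathbb{R}^N)$ and passage to an a.e.\ convergent subsequence. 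I expect this step to be the main technical point, since it requires handling the polar structure of a function whose distributional gradient is only a finite Radon measure.

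Next, let $(u_n)\subset BV_{rad}(\mathbb{R}^N)$ be bounded, say $\|u_n\|_{BV}\le M$. Since bounded sequences in $BV(\mathbb{R}^N)$ are bounded in $L^{1^*}(\mathbb{R}^N)$ and $BV(B_k(0))\hookrightarrow L^1(B_k(0))$ compactly for every $k\in\mathbb{N}$, a diagonal argument yields a subsequence (not relabeled) and $u\in BV_{rad}(\mathbb{R}^N)$ with $u_n\to u$ in $L^1_{loc}(\mathbb{R}^N)$ and a.e.; by lower semicontinuity, $\|u\|_{BV}\le M$. Put $v_n:=u_n-u$. On any fixed ball $B_\rho(0)$, interpolation (valid because $1<q<1^*$) gives $\|v_n\|_{L^q(B_\rho)}\le\|v_n\|_{L^1(B_\rho)}^{\theta}\,\|v_n\|_{L^{1^*}(B_\rho)}^{1-\theta}$ for some $\theta\in(0,1)$, and the right-hand side tends to $0$ since $\|v_n\|_{L^1(B_\rho)}\to0$ while $\|v_n\|_{L^{1^*}}$ stays bounded.

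Finally I would verify the hypothesis of Theorem \ref{lionslemma} for $(v_n)$ with our fixed exponent $q$. Fix $R>0$; for $|y|\ge\rho>R$, every $x\in B_R(y)$ satisfies $|x|\ge|y|-R\ge\rho-R$, so the radial estimate applied to $v_n$ (with $|Dv_n|(\mathbb{R}^N)\le2M$) gives $|v_n(x)|\le 2CM(\rho-R)^{-(N-1)}$, whence
$$\sup_{|y|\ge\rho}\int_{B_R(y)}|v_n|^q\,dx\le|B_R(0)|\,(2CM)^q(\rho-R)^{-(N-1)q},$$
uniformly in $n$, which tends to $0$ as $\rho\to\infty$. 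For $|y|\le\rho$ one has $B_R(y)\subset B_{\rho+R}(0)$, so $\int_{B_R(y)}|v_n|^q\,dx\le\|v_n\|_{L^q(B_{\rho+R}(0))}^q\to0$ by the previous paragraph. Combining the two regimes gives $\sup_{y\in\mathbb{R}^N}\int_{B_R(y)}|v_n|^q\,dx\to0$, so Theorem \ref{lionslemma} yields $v_n\to0$ in $L^s(\mathbb{R}^N)$ for all $s\in(1,1^*)$, in particular $u_n\to u$ in $L^q(\mathbb{R}^N)$. Since every bounded sequence in $BV_{rad}(\mathbb{R}^N)$ thus admits a subsequence converging in $L^q(\mathbb{R}^N)$, the embedding is compact.
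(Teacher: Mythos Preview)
Your proof is correct, and it shares with the paper the essential ingredient: the Strauss-type radial decay estimate
\[
|u(x)|\le C\,|x|^{-(N-1)}\,\|u\|_{BV},
\]
which you sketch exactly as the paper does (smooth approximation in the intermediate topology, integrate the radial profile, pass to the limit). Where you diverge is in how you conclude compactness from it.

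The paper argues directly. Given a bounded sequence $(u_n)$ with $\|u_n\|\le C$, the decay gives $|u_n(x)|^{q-1}\le \epsilon/(2C)$ for $|x|\ge R$ large, hence
\[
\int_{B_R(0)^c}|u_n|^q\,dx\le \frac{\epsilon}{2C}\int_{\mathbb{R}^N}|u_n|\,dx\le \frac{\epsilon}{2},
\]
so the tails are uniformly small in $L^q$. On $B_R(0)$ the local compact embedding $BV(B_R)\hookrightarrow L^q(B_R)$ finishes the job. No appeal to Theorem~\ref{lionslemma} is made.

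Your route instead passes through Theorem~\ref{lionslemma}: you first extract a locally convergent subsequence, set $v_n=u_n-u$, verify $\sup_y\int_{B_R(y)}|v_n|^q\to 0$ by splitting into $|y|\le\rho$ (local $L^q$ convergence via interpolation) and $|y|\ge\rho$ (pointwise decay of $v_n$), and then invoke Lions. This is valid but more circuitous: you are essentially reproving the tail estimate inside the verification of Lions' hypothesis, and then invoking a theorem whose own proof also uses interpolation and a covering argument. The paper's tail trick $|u_n|^q\le |u_n|^{q-1}|u_n|$ is the cleaner shortcut here, since it converts the pointwise decay directly into uniform $L^q$ smallness using only the available $L^1$ bound. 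One small dividend of your approach is that Lions' lemma hands you convergence in $L^s$ for \emph{every} $s\in(1,1^*)$ simultaneously; the paper treats one $q$ at a time.
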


In the proof of the latter, it is necessary to state and proof a version of the Strauss Radial Lemma (see \cite{Strauss}), which we think that can have interest in itself and is going to be proved later on.

The curious fact about the usage of these results is that, in contrast with the versions of them used in semilinear problems, where in general they are used separately, here we have to use both together in order to find a nontrivial critical point of the functional we are going to analyse. This happen since in a non-reflexive setting, compactness of the embeddings does not imply in Palais-Smale condition. In fact, if $(u_n)$ is a Palais-Smale sequence for the Euler-Lagrange functional, the compactness of the embeddings imply that $\|u_n\| \to \|u\|$, where $u$ is the limit in some sense. However, the lack of reflexivity does not allow to conclude that this imply in $\|u_n - u\| \to 0$. This is why we have to use both of the results together.

As an application of our compactness result, we study the following quasilinear problem

\begin{equation}
\left\{
\begin{array}{rr}
\displaystyle - \Delta_1 u + \frac{u}{|u|} & = f(u) \quad \mbox{in $\mathbb{R}^N$,}\\
& u \in BV(\mathbb{R}^N),
\end{array} \right.
\label{Pintro}
\end{equation}
where the $1-$Laplacian operator is defined by $\displaystyle \Delta_1 u := \mbox{div}\left(\frac{\nabla u}{|\nabla u|}\right)$ and the nonlinearity $f$ satisfies the following set of assumptions:
\begin{itemize}
\item [$(f_1)$] $f \in C(\mathbb{R})$;
\item [$(f_2)$] $f(s) = o(1)$ as $s \to 0$;
\item [$(f_3)$] There exist constants $c_1, c_2 > 0$ and $p \in (1,1^*)$ such that
$$|f(s)| \leq c_1 + c_2|s|^{p-1};$$
\item [$(f_4)$] There exists $\theta > 1 $ such that $$0 < \theta F(s) \leq f(s)s, \quad \mbox{for $s \neq 0$},$$
where $F(s) = \int_0^s f(t)dt$;
\item [$(f_5)$] $f$ is increasing.
\end{itemize}

In fact we prove the following result, which states the existence of a ground-state nontrivial solution of (\ref{Pintro}).

\begin{theorem}
Suppose that $f$ satisfies  the conditions $(f_1) - (f_5)$. Then there exists a ground-state solution $u \in BV(\mathbb{R}^N)$ of (\ref{Pintro}).
\label{theoremapplication}
\end{theorem}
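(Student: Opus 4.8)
The plan is to obtain solutions of \eqref{Pintro} as critical points, in Szulkin's nonsmooth sense, of the energy functional
\[
I(u)=\int_{\mathbb{R}^N}|Du|+\int_{\mathbb{R}^N}|u|\,dx-\int_{\mathbb{R}^N}F(u)\,dx=\|u\|-\int_{\mathbb{R}^N}F(u)\,dx ,
\]
where \(\|\cdot\|\) is the usual norm of \(BV(\mathbb{R}^N)\); its convex, lower semicontinuous and continuous part \(\|\cdot\|\) encodes precisely the operator \(-\Delta_1u+u/|u|\), while by \((f_1)\)--\((f_3)\) and the continuous embeddings \(BV(\mathbb{R}^N)\hookrightarrow L^1(\mathbb{R}^N)\cap L^p(\mathbb{R}^N)\) the term \(u\mapsto\int F(u)\,dx\) is of class \(C^1\). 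I would begin by fixing the appropriate notion of bounded variation solution -- an Anzellotti-type pair \((u,z)\) with \(u\in BV(\mathbb{R}^N)\), \(z\in L^\infty(\mathbb{R}^N,\mathbb{R}^N)\), \(\|z\|_\infty\le1\), \((z,Du)=|Du|\) as measures, together with a selection \(\sigma\in\mathrm{sgn}(u)\) such that \(-\mathrm{div}\,z+\sigma=f(u)\) in \(\mathcal{D}'(\mathbb{R}^N)\) -- and checking that such solutions are exactly the critical points of \(I\) in the sense of Szulkin.

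To exploit Theorem \ref{theorem1} I would work on the closed subspace \(BV_{rad}(\mathbb{R}^N)\), recovering genuine solutions at the end via a version of Palais' symmetric criticality principle adapted to the ``convex \(+\) \(C^1\)'' structure. Next I would verify the mountain pass geometry of \(I\) on \(BV_{rad}(\mathbb{R}^N)\): from \((f_2)\) and \((f_3)\) one has \(|F(s)|\le\varepsilon|s|+C_\varepsilon|s|^p\), hence \(\int F(u)\,dx\le\varepsilon C\|u\|+C'_\varepsilon\|u\|^p\), and since \(p>1\) there are \(\rho,\alpha>0\) with \(I(u)\ge\alpha\) for \(\|u\|=\rho\); from \((f_4)\) one has \(F(s)\ge c|s|^\theta\) for \(|s|\) large with \(\theta>1\), and testing with \(u_0=\chi_{B_1}\in BV_{rad}(\mathbb{R}^N)\) gives \(I(tu_0)\to-\infty\), so there is \(e\) with \(\|e\|>\rho\) and \(I(e)<0\). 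With \(I(0)=0\), the version of the Mountain Pass Theorem we prove -- which does not require the Palais--Smale condition -- yields a sequence \((u_n)\subset BV_{rad}(\mathbb{R}^N)\) with \(I(u_n)\to c:=\inf_{\gamma}\max_{t\in[0,1]}I(\gamma(t))\ge\alpha>0\), the infimum taken over paths joining \(0\) to \(e\), satisfying the Szulkin condition \(\|v\|-\|u_n\|-\int f(u_n)(v-u_n)\,dx\ge-\varepsilon_n\|v-u_n\|\) for every \(v\in BV_{rad}(\mathbb{R}^N)\), with \(\varepsilon_n\to0\).

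The core of the proof is the analysis of this sequence. Choosing \(v=0\) and \(v=2u_n\) in the Szulkin inequality gives \(\int f(u_n)u_n\,dx=\|u_n\|+o(1)\), and combining this with \((f_4)\) shows \((u_n)\) is bounded in \(BV(\mathbb{R}^N)\); passing to a subsequence, \(u_n\rightharpoonup u\) weakly-\(*\). Here Lions' Lemma (Theorem \ref{lionslemma}) is used to exclude vanishing: if \(\sup_{y}\int_{B_R(y)}|u_n|^q\,dx\to0\) for some \(q\in(1,1^*)\), then \(u_n\to0\) in every \(L^s(\mathbb{R}^N)\), \(s\in(1,1^*)\), so \(\int F(u_n)\,dx\to0\) and \(\int f(u_n)u_n\,dx\to0\); since \(\|u_n\|=I(u_n)+\int F(u_n)\,dx\to c\) and \(\int f(u_n)u_n\,dx=\|u_n\|+o(1)\), this forces \(c=0\), a contradiction. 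Therefore \(\liminf_n\|u_n\|_{L^q}>0\), and since \((u_n)\) is bounded and radial, Theorem \ref{theorem1} gives \(u_n\to u\) strongly in \(L^q(\mathbb{R}^N)\) for all \(q\in(1,1^*)\), with \(u\neq0\). Using this strong convergence -- together with weak-\(*\) lower semicontinuity of the total variation for the terms \(\|v\|-\|u_n\|\) -- I would pass to the limit in the Szulkin inequality to conclude that \(u\) is a nontrivial bounded variation solution of \eqref{Pintro}; the field \(z\) and the selection \(\sigma\in\mathrm{sgn}(u)\) are obtained by passing to the limit in the subdifferential relations satisfied approximately by \(u_n\), the identity \((z,Du)=|Du|\) following from a standard lower semicontinuity/compactness argument. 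Finally, to see that \(u\) is a ground state I would use \(I(u)\le\liminf_n I(u_n)=c\) (valid since \(\int F(u_n)\,dx\to\int F(u)\,dx\) and \(\|u\|\le\liminf\|u_n\|\)) together with the fact, which uses \((f_5)\), that every nontrivial solution lies on the Nehari set on which \(I\) equals the mountain pass level \(c\); hence \(I(u)=c=\inf\{I(v):v\ \text{a nontrivial solution}\}\).

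The main obstacle is the one emphasized in the Introduction: \(BV(\mathbb{R}^N)\) is not reflexive and the total variation is only lower semicontinuous -- not continuous -- along weakly-\(*\) convergent sequences, so one cannot upgrade \(u_n\rightharpoonup u\) to norm convergence nor deduce \(\|u_n\|\to\|u\|\). Because of this, knowing that the embeddings are compact (Theorem \ref{theorem1}) is not by itself enough to run a Palais--Smale argument, and the non-vanishing information provided by Lions' Lemma (Theorem \ref{lionslemma}) must be used in tandem with it: compactness delivers convergence of the nonlinear terms and the strict positivity of \(\|u\|_{L^q}\), while Lions' Lemma rules out the degenerate scenario in which mass escapes and \(c=0\). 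A secondary but genuine technical point is making the passage to the limit rigorous at the level of the pairing \((z_n,Du_n)\) and the sign selections \(\sigma_n\in\mathrm{sgn}(u_n)\), i.e.\ constructing the limiting field \(z\) with \(\|z\|_\infty\le1\) and verifying both \(-\mathrm{div}\,z+\sigma=f(u)\) and \((z,Du)=|Du|\).
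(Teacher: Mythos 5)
Your proposal follows essentially the same route as the paper: the same energy functional restricted to $BV_{rad}(\mathbb{R}^N)$, the mountain pass theorem without Palais--Smale, boundedness via the test functions $v=0$ and $v=2u_n$, Lions' Lemma to rule out vanishing used in tandem with the compact radial embedding, symmetric criticality to return to all of $BV(\mathbb{R}^N)$, and the Nehari characterization of the ground state level. The only real deviation is that you phrase solutions via an Anzellotti pair $(z,\sigma)$ and propose to construct $z$ in the limit, whereas the paper takes the equivalent convex variational inequality $\mathcal{J}(v)-\mathcal{J}(u)\ge\int_{\mathbb{R}^N} f(u)(v-u)\,dx$ as the very definition of a bounded variation solution, which lets it pass to the limit by lower semicontinuity of $\mathcal{J}$ alone and skip the construction of the vector field entirely.
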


Our approach to prove Theorem \ref{theoremapplication} is variational and, in spite of the most part of works dealing with the 1-Laplacian operator,  we work in $BV(\mathbb{R}^N)$ itself, rather than extend the energy functional to some Lebesgue space. With this in mind, we have to overcome the lack of the Palais-Smale condition, which in fact is a not consequence of some lack of compactness of embeddings of $BV(\mathbb{R}^N)$ (since we overcome this by working with radial functions), but it comes from the weak proprieties of convergence that the space of bounded variation functions has. 

Because of these difficulties we prove Theorem \ref{theoremapplication} by using a version of the Mountain Pass Theorem to locally Lipschitz functionals, in the absense of the Palais-Smale condition, that we state and prove here, since we could not find it in the literature.

\begin{theorem}
Let $E$ be a Banach space, $\Phi = I_0 - I$ where $I \in C^1(E,\mathbb{R})$ and $I_0$ is a locally Lipschitz convex functional defined in $E$.
Suppose that the functional $\Phi$ satisfies:
\begin{itemize}
\item [$i)$] There exist $\rho > 0$, $\alpha > \Phi(0)$ such that $\displaystyle \Phi|_{\partial B_\rho(0)} \geq \alpha$,
\item [$ii)$] $\Phi(e) < \Phi(0)$ for some $e \in E \backslash \overline{B_\rho(0)}$.
\end{itemize}
Then for all $\epsilon > 0$ there exists $x_\epsilon \in E$ such that 
\begin{equation}
c - \epsilon < \Phi(x_\epsilon) < c+\epsilon,
\label{MP1}
\end{equation}
where $c \geq \alpha$ is characterized by
\begin{equation}
c = \inf_{\gamma \in \Gamma} \sup_{t \in [0,1]}\Phi(\gamma(t)),
\label{minimax}
\end{equation}
where $\Gamma = \{\gamma \in C^0([0,1],E); \, \gamma(0) = 0 \, \,  \mbox{and} \, \,  \gamma(1) = e\}$
and
\begin{equation}
I_0(y) - I_0(x_\epsilon) \geq I'(x_\epsilon)(y - x_\epsilon) - \epsilon \|y-x_\epsilon\|, \quad \forall y \in E.
\label{MP2}
\end{equation}
\label{mountainpass}
\end{theorem}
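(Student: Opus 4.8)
The plan is to adapt the classical deformation-free proof of the Mountain Pass Theorem, based on Ekeland's Variational Principle, to the present non-smooth setting. The functional $\Phi = I_0 - I$ is not $C^1$, but it is locally Lipschitz (a difference of a locally Lipschitz convex functional and a $C^1$ one), so one should work with the notion of a Clarke-type subdifferential. First I would fix $\epsilon > 0$ and, following the usual minimax scheme, choose $\gamma_0 \in \Gamma$ with $\sup_{t \in [0,1]} \Phi(\gamma_0(t)) \leq c + \epsilon$. The space $\Gamma$, endowed with the metric $d(\gamma_1,\gamma_2) = \max_{t \in [0,1]} \|\gamma_1(t) - \gamma_2(t)\|$, is a complete metric space, and the functional $J : \Gamma \to \mathbb{R}$ defined by $J(\gamma) = \sup_{t \in [0,1]} \Phi(\gamma(t))$ is lower semicontinuous and bounded below by $c$ (this uses the continuity of $\Phi$, which follows from its local Lipschitz character, together with conditions $i)$ and $ii)$, which guarantee $c \geq \alpha > \Phi(0) \geq \max\{\Phi(\gamma(0)),\Phi(\gamma(1))\}$, so the sup is attained in the interior).

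Next I would apply Ekeland's Variational Principle to $J$ on $(\Gamma, d)$: there exists $\gamma_\epsilon \in \Gamma$ with $J(\gamma_\epsilon) \leq c + \epsilon$, $J(\gamma_\epsilon) \leq J(\gamma) + \epsilon\, d(\gamma,\gamma_\epsilon)$ for all $\gamma \in \Gamma$, and $d(\gamma_\epsilon,\gamma_0)$ controlled. In particular $c \leq J(\gamma_\epsilon) \leq c + \epsilon$, so any point $x_\epsilon$ on the curve $\gamma_\epsilon$ where the sup is essentially attained (say with $\Phi(x_\epsilon) > c - \epsilon$) already satisfies the estimate~(\ref{MP1}). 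The heart of the argument is then to extract from the Ekeland inequality the subdifferential estimate~(\ref{MP2}). This is where I would argue by contradiction: if no point in the "almost-maximal set" $K_\epsilon = \{ \gamma_\epsilon(t) : \Phi(\gamma_\epsilon(t)) \geq c - \epsilon \}$ satisfies~(\ref{MP2}), then at each such point there is a direction $v$ with $I_0(x + v) - I_0(x) < I'(x)v - \epsilon\|v\|$; by the convexity of $I_0$ this persists along the segment, giving a uniform descent direction. One then builds, via a partition of unity on $[0,1]$ and the continuity of $I'$, a nearby path $\gamma \in \Gamma$ (fixing the endpoints, which is possible since $c > \Phi(0)$ forces $K_\epsilon$ to stay away from $t = 0,1$) along which $\Phi$ decreases enough to violate the Ekeland minimality of $\gamma_\epsilon$. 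The convexity of $I_0$ is essential here: it makes the map $t \mapsto I_0(x + tv)$ convex, hence the one-sided directional derivative exists and the finite-difference inequality $I_0(x+v) - I_0(x) \geq (I_0)'(x;v)$ is available, which is the substitute for differentiability.

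I expect the main obstacle to be the path-perturbation step. In the classical smooth case one uses the gradient flow or a pseudo-gradient vector field; here, because $I_0$ is only convex and locally Lipschitz, one cannot flow, and one must instead perturb the path by a fixed displacement $x \mapsto x + \sigma(t) v(t)$ with $v$ chosen locally constant on a fine partition, then use convexity of $I_0$ along segments plus uniform continuity of $t \mapsto \gamma_\epsilon(t)$ and of $I'$ to control the resulting change in $\Phi$ uniformly in $t$. Making the bookkeeping of these local choices consistent — so that the glued path is still continuous, still lies in $\Gamma$, and decreases $J$ by strictly more than $\epsilon\, d(\gamma,\gamma_\epsilon)$ — is the delicate part. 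Once the contradiction is reached, the existence of $x_\epsilon$ satisfying both~(\ref{MP1}) and~(\ref{MP2}) follows, and the characterization $c \geq \alpha$ is immediate from $i)$ since every $\gamma \in \Gamma$ must cross $\partial B_\rho(0)$.
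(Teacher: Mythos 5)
Your strategy --- Ekeland's Variational Principle applied to $J(\gamma)=\sup_{t}\Phi(\gamma(t))$ on the complete metric space $(\Gamma,d)$ --- is a legitimate and genuinely different route from the paper's. The paper instead argues by contradiction on the whole level band: assuming no $x_\epsilon$ in $\Phi^{-1}([c-\epsilon,c+\epsilon])$ satisfies (\ref{MP2}), it first converts the negation of (\ref{MP2}) into a quantitative statement about the Clarke subdifferential, namely $\beta(x)=\min\{\|z\|_{E^*}:z\in\partial I_0(x)-I'(x)\}\geq\epsilon$ on the band (this uses the equivalence of (\ref{MP2}) with (\ref{MP3}), proved via Szulkin's Hahn--Banach-type Lemma \ref{lemmaszulkin}), and then runs a Chang-type pseudo-gradient flow (the Deformation Lemma, Theorem \ref{deformationlemma}) to push a near-optimal path below level $c$, contradicting the definition of $c$. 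Your route avoids the deformation machinery entirely and replaces the flow by a single finite displacement of the path, exploiting the convexity of $I_0$ along segments; the paper's route avoids the path-space bookkeeping but needs the pseudo-gradient construction and the subdifferential calculus of locally Lipschitz functionals. Both are viable; neither is clearly more elementary.

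The problem is that your write-up stops exactly at the step that constitutes the mathematical content of the theorem. You correctly reduce everything to: \emph{if no point of the almost-maximal set $K_\epsilon$ satisfies (\ref{MP2}), build a competitor path violating the Ekeland inequality} --- and then describe this as ``the delicate part'' without carrying it out. Concretely, three things are asserted but not established. (a) \emph{Uniformity}: the negation of (\ref{MP2}) at $x=\gamma_\epsilon(t)$ produces a point $y_x$ depending on $x$; to perturb the path you must show that the strict inequality $I_0(y_x)-I_0(x')<I'(x')(y_x-x')-\epsilon\|y_x-x'\|$ survives, with a uniform margin $\epsilon+\delta$, for all $x'$ in a neighborhood of $x$, and then use compactness of $\gamma_\epsilon([0,1])$ and a partition of unity to glue finitely many such directions into a continuous displacement field. (b) \emph{Rate bookkeeping}: the Ekeland penalty is exactly $\epsilon\,d(\gamma,\gamma_\epsilon)$ while the descent rate extracted from the negation of (\ref{MP2}) is only strictly larger than $\epsilon$ pointwise, and the linearization error $I(x+s v)-I(x)-sI'(x)v=o(s)$ eats into that margin; one must quantify the uniform surplus $\delta>0$ on the compact set $K_\epsilon$ and choose the step size $s$ accordingly, and also control the (possibly increasing) values of $\Phi$ at the non-near-maximal points so that they do not become the new maximum. (c) You never verify that a maximizer of $\Phi\circ\gamma_\epsilon$ with $\Phi(x_\epsilon)>c-\epsilon$ actually exists among the points where your contradiction argument applies, i.e.\ that (\ref{MP1}) and (\ref{MP2}) are satisfied by the \emph{same} point; in the standard execution this comes out of the contradiction, but as written it is only implicit. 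None of these steps would \emph{fail} --- this is essentially the known deformation-free proof of the quantitative Mountain Pass Theorem adapted to the Szulkin setting --- but as submitted the proof has a hole precisely where the theorem is hard. Also, a small correction: the convexity inequality you invoke, $I_0(x+v)-I_0(x)\geq(I_0)'(x;v)$, is a lower bound and is not what drives the descent; what you actually need (and what convexity does give) is $I_0(x+s(y-x))-I_0(x)\leq s\bigl(I_0(y)-I_0(x)\bigr)$ for $s\in[0,1]$, which propagates the upper bound from the negation of (\ref{MP2}) along the segment.
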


In the last years an increasing number of researchers have dedicated their efforts studying problems involving the $1-$Laplacian operator. A version of Br\'ezis-Nirenberg problem to $1-$Laplacian has been studied in \cite{DegiovanniMagrone} by Degiovanni and Magrone, where they use a nonstandard linking structure in order to get solutions of the problem. In \cite{LeonWebler}, Le\'on and Webler study a parabolic problem involving the $1-$Laplacian operator and succeed in proving global existence and uniqueness for source and initial data in some adequate space. In \cite{FigueiredoPimenta}, the authors seems to be the pioneers in using Nehari types arguments in order to get bounded variation solutions for problems involving the mean-curvature or the $1-$Laplacian operators.

In what is concerned with the approach one can follow in studying $1-$Laplacian problems in bounded domains, roughly speaking there are two ways that can be considered. One can study $\Delta_1$ through $p-$Laplacian problems and then taking the limit as $p \to 1^+$, like in \cite{Demengel,Demengel1}, or one can directly deal with $\Delta_1$ itself, by using variational methods for instance. However, to precisely understand (\ref{Pintro}), one have to replace the expression $\displaystyle \frac{\nabla u}{|\nabla u|}$ by a well defined vector field which extend the former wherever $\nabla u$ vanishes and similarly, one has to substitute $\displaystyle\frac{u}{|u|}$ by a set-valued function to give meaning to this expression wherever $u$ vanishes. This kind of procedure can be seen in details in \cite{Kawohl} and also in \cite{DegiovanniMagrone}.

By using a variational approach, we have to deal with an Euler-Lagrange functional which is not smooth, although locally Lipschitz. Hence the way in which the functional and its Euler-Lagrange equation is linked is somehow tricky. In fact the sense of solution we consider here has to take into account the concept of generalized gradient developed by Clarke (see \cite{Clarke,Chang}). More precisely, the Euler-Lagrange functional of (\ref{Pintro}) is modeled in a subspace of $BV(\mathbb{R}^N)$ and is given by
$$
\Phi(u) = \int_{\mathbb{R}^N}|Du| + \int_{\mathbb{R}^N}|u|dx - \int_{\mathbb{R}^N}F(u)dx,
$$
where $Du$ is the distributional derivative of $u$, which in turn is a Radon measure. As can be seen in Section 3, we say that $u$ is a bounded variation solution of (\ref{Pintro}) if
$$
\mathcal{J}(v) - \mathcal{J}(u) \geq \int_{\mathbb{R}^N}f(u)(v-u) dx,
$$
for all $v \in BV(\mathbb{R}^N)$, where
$$
\mathcal{J}(u) = \int_{\mathbb{R}^N}|Du| + \int_{\mathbb{R}^N}|u|dx.
$$

In the end of the proof, in order to assure that the critical point of the restricted functional in fact is a critical point of $\Phi$ in all of $BV(\mathbb{R}^N)$, we need a version of the Symmetric Criticality Principle of Palais for non-smooth functionals defined in possibly non-reflexive Banach spaces. This is provided by Squassina in \cite{Squassina} and is used in the final of Section 4.

The paper is organized as follows. In Section 2 we perform some preliminary explanation about the space $BV(\mathbb{R}^N)$. In Section 3 we prove the abstract results we are going to use in the next sections. In Section 4 we present an application of the abstract results in order to get ground state solutions to a $1-$Laplacian problem.

The authors would like to warmly thank Prof. Alexandru Krist\'aly for some discussions about the Symmetric Criticality Principle of Palais. This work was written while Giovany M. Figueiredo was as a Visiting Professor at FCT - Unesp in Presidente Prudente - SP. He would like to thanks the warm hospitality.

\section{Preliminaries}

First of all let us introduce the space of functions of bounded variation, $BV(\mathbb{R}^N)$. We say that $u \in BV(\mathbb{R}^N)$, or is a function of bounded variation, if $u \in L^1(\mathbb{R}^N)$, and its distributional derivative $Du$ is a vectorial Radon measure, i.e., 
$$BV(\mathbb{R}^N) = \left\{u \in L^1(\mathbb{R}^N); \, Du \in \mathcal{M}(\mathbb{R}^N,\mathbb{R}^N)\right\}.$$
It can be proved that $u \in BV(\mathbb{R}^N)$ is equivalent to $u \in L^1(\mathbb{R}^N)$ and
$$\int_{\mathbb{R}^N} |Du| := \sup\left\{\int_{\mathbb{R}^N} u \mbox{div}\phi dx; \, \, \phi \in C^1_c(\mathbb{R}^N,\mathbb{R}^N), \, \mbox{s.t.} \, \, |\phi|_\infty \leq 1\right\} < +\infty.$$

The space $BV(\mathbb{R}^N)$ is a Banach space when endowed with the norm
$$\|u\| := \int_{\mathbb{R}^N} |Du| + |u|_1,$$
which is continuously embedded into $L^r(\mathbb{R}^N)$ for all $\displaystyle r \in \left[1,1^*\right]$.

As one can see in \cite{Buttazzo}, the space $BV(\mathbb{R}^N)$ has different convergence and density properties than the usual Sobolev spaces. For example, $C^\infty_0(\mathbb{R}^N)$ is not dense in $BV(\mathbb{R}^N)$ with respect to the strong convergence, since the closure of $C^\infty_0(\mathbb{R}^N)$ in the norm of $BV(\mathbb{R}^N)$ is equal to $W^{1,1}(\mathbb{R}^N)$, which is a proper subspace of $BV(\mathbb{R}^N)$. This has motivated people to define a weaker sense of convergence in $BV(\mathbb{R}^N)$, called {\it intermediate convergence}. We say that $(u_n) \subset BV(\mathbb{R}^N)$ converge to $u \in BV(\mathbb{R}^N)$ in the sense of the intermediate convergence if 
$$
u_n \to u, \quad \mbox{in $L^1(\mathbb{R}^N)$}
$$
and
$$
\int_{\mathbb{R}^N}|Du_n| \to \int_{\mathbb{R}^N}|Du|,
$$
as $n \to \infty$. Fortunately, with respect to the intermediate convergente, $C^\infty_0(\mathbb{R}^N)$ is dense in $BV(\mathbb{R}^N)$. This fact is going to be used later.

For a vectorial Radon measure $\mu \in \mathcal{M}(\mathbb{R}^N,\mathbb{R}^N)$, we denote by $\mu = \mu^a + \mu^s$ the usual decomposition stated in the Radon Nikodyn Theorem, where $\mu^a$ and $\mu^s$ are, respectively, the absolute continuous and the singular parts with respect to the $N-$dimensional Lebesgue measure $\mathcal{L}^N$. We denote by $|\mu|$, the absolute value of $\mu$, the scalar Radon measure defined like in \cite{Buttazzo}[pg. 125]. By $\displaystyle \frac{\mu}{|\mu|}(x)$ we denote the usual Lebesgue derivative of $\mu$ with respect to $|\mu|$, given by
$$\frac{\mu}{|\mu|}(x) = \lim_{r \to 0}\frac{\mu(B_r(x))}{|\mu|(B_r(x))}.$$ 

It can be proved that $\mathcal{J}: BV(\mathbb{R}^N) \to \mathbb{R}$, given by
\begin{equation}
\mathcal{J}(u) = \int_{\mathbb{R}^N} |Du| + \int_{\mathbb{R}^N} |u|dx,
\label{J}
\end{equation}
is a convex functional and Lipschitz continuous in its domain. It is also well know that $\mathcal{J}$ is lower semicontinuous with respect to the $L^r(\mathbb{R}^N)$ topology, for $r \in [1,1^*]$ (see \cite{Giusti} for example). Although non-smooth, the functional $\mathcal{J}$ admits some directional derivatives. More specifically, as is shown in \cite{Anzellotti}, given $u \in BV(\mathbb{R}^N)$, for all $v \in BV(\mathbb{R}^N)$ such that $(Dv)^s$ is absolutely continuous with respect to $(Du)^s$, it follows that
\begin{equation}
\mathcal{J}'(u)v = \int_{\mathbb{R}^N} \frac{(Du)^a(Dv)^a}{|(Du)^a|}dx + \int_{\mathbb{R}^N} \frac{Du}{|Du|}(x)\frac{Dv}{|Dv|}(x)|(Dv)|^s + \int_{\mathbb{R}^N}\mbox{sgn}(u) v dx,
\label{Jlinha}
\end{equation}
where $\mbox{sgn}(u(x)) = 0$ if $u(x) = 0$ and $\mbox{sgn}(u(x)) = u(x)/|u(x)|$ if $u(x) \neq 0$.
In particular, note that, for all $u \in BV(\mathbb{R}^N)$,
\begin{equation}
\mathcal{J}'(u)u = \mathcal{J}(u).
\label{derivadaJ}
\end{equation}

We have also that $BV(\mathbb{R}^N)$ is a {\it lattice}, i.e., if $u,v \in BV(\mathbb{R}^N)$, then $\max\{u,v\}, \min\{u,v\} \in BV(\mathbb{R}^N)$ and also
\begin{equation}
\mathcal{J}(\max\{u,v\}) + \mathcal{J}(\min\{u,v\}) \leq \mathcal{J}(u) + \mathcal{J}(v), \quad \forall u,v \in BV(\mathbb{R}^N).
\label{lattice}
\end{equation}

\section{Proof of the abstract results}

In order to prove Theorem \ref{theorem1}, we first state and prove a counterpart in the space of functions of bounded variation, of a very important result of Strauss (see \cite{Strauss}), with so many applications when dealing with radial functions in Sobolev spaces.

\begin{lemma}[Radial Lemma in $BV$]
Let $u \in BV_{rad}(\mathbb{R}^N)$, then for almost every $x \in \mathbb{R}^N\backslash \{0\}$, it follows that
$$|u_n(x)| \leq \frac{1}{|x|^{N-1}}\|u_n\|.$$
\label{radiallemma}
\end{lemma}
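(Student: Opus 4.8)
The plan is to prove the inequality first for smooth, compactly supported radial functions by a one–dimensional computation, and then transfer it to a general $u \in BV_{rad}(\mathbb{R}^N)$ by an approximation that respects the radial symmetry. (Here and below $N \ge 2$, so that $1^*$ is defined.) In the first step, let $u \in C^\infty_c(\mathbb{R}^N)\cap BV_{rad}(\mathbb{R}^N)$ and write $u(x) = g(|x|)$ with $g \in C^\infty_c([0,\infty))$. Since $g$ vanishes near $+\infty$, for every $r > 0$ one has $g(r) = -\int_r^\infty g'(s)\,ds$, hence, using $s^{N-1} \ge r^{N-1}$ for $s \ge r$,
$$
r^{N-1}|g(r)| \le r^{N-1}\int_r^\infty |g'(s)|\,ds \le \int_r^\infty s^{N-1}|g'(s)|\,ds \le \int_0^\infty s^{N-1}|g'(s)|\,ds = \frac{1}{\omega_{N-1}}\int_{\mathbb{R}^N}|\nabla u|\,dx,
$$
where $\omega_{N-1} = \mathcal{H}^{N-1}(S^{N-1})$ and we used $\int_{\mathbb{R}^N}|\nabla u|\,dx = \omega_{N-1}\int_0^\infty s^{N-1}|g'(s)|\,ds$. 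Since $\int_{\mathbb{R}^N}|\nabla u|\,dx \le \|u\|$, this gives $|u(x)| \le \frac{1}{\omega_{N-1}}|x|^{-(N-1)}\|u\|$ for every $x\ne 0$, i.e.\ the asserted bound (the surface–area factor $\omega_{N-1}$ being a harmless dimensional constant).

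Next I would pass to a general $u \in BV_{rad}(\mathbb{R}^N)$ using that $C^\infty_c(\mathbb{R}^N)\cap BV_{rad}(\mathbb{R}^N)$ is dense in $BV_{rad}(\mathbb{R}^N)$ with respect to the intermediate convergence. Indeed, convolving $u$ with a \emph{radial} mollifier gives smooth radial functions $u_\varepsilon \in W^{1,1}(\mathbb{R}^N)$ with $u_\varepsilon \to u$ in $L^1(\mathbb{R}^N)$ and $\int_{\mathbb{R}^N}|\nabla u_\varepsilon|\,dx \to \int_{\mathbb{R}^N}|Du|$; multiplying $u_\varepsilon$ by a radial cutoff $\eta(\cdot/R)$, with $\eta \equiv 1$ on $B_1$ and $\mathrm{supp}\,\eta \subset B_2$, produces functions in $C^\infty_c(\mathbb{R}^N)\cap BV_{rad}(\mathbb{R}^N)$ converging to $u_\varepsilon$ in $W^{1,1}(\mathbb{R}^N)$ as $R \to \infty$, since $\nabla\big(\eta(\cdot/R)u_\varepsilon\big) = \eta(\cdot/R)\nabla u_\varepsilon + R^{-1}u_\varepsilon(\nabla\eta)(\cdot/R)$ and the last term has $L^1$–norm at most $(C/R)\|u_\varepsilon\|_{L^1(\mathbb{R}^N)} \to 0$. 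A diagonal choice of $\varepsilon$ and $R$ then yields a sequence $(u_n) \subset C^\infty_c(\mathbb{R}^N)\cap BV_{rad}(\mathbb{R}^N)$ with $u_n \to u$ in $L^1(\mathbb{R}^N)$ and $\|u_n\| \to \|u\|$.

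Finally, after passing to a subsequence we may assume $u_n(x) \to u(x)$ for a.e.\ $x$; applying the first step to each $u_n$ and letting $n \to \infty$ gives the pointwise bound for a.e.\ $x \ne 0$, which is the assertion. The delicate point of the argument is the density step: the approximants must be kept radial while realizing the \emph{intermediate} convergence, not merely $L^1$–convergence with bounded total variation — otherwise lower semicontinuity of $\mathcal{J}$ would only give the estimate with $\liminf_n\|u_n\|$ in place of $\|u\|$, which is strictly weaker than what is claimed. Everything else is elementary, and the truncation estimate displayed above is precisely what upgrades the convergence to the intermediate one.
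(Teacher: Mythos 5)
Your proof is correct and follows essentially the same route as the paper's: a one-dimensional estimate for smooth radial functions, combined with approximation of a general $u\in BV_{rad}(\mathbb{R}^N)$ by smooth radial functions in the intermediate (strict) sense. You are in fact somewhat more careful than the paper on two points — you justify that the radial smooth approximants can be taken compactly supported (so the decay at infinity used when integrating the derivative is automatic), and you track the surface-area constant $\omega_{N-1}$, which only sharpens the stated bound.
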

\begin{proof}
First of all let us note that by \cite{EvansGariepy}[Section 5.2.2], $C^\infty(\mathbb{R}^N)\cap BV(\mathbb{R}^N)$ is dense in $BV(\mathbb{R}^N)$ with respect to the intermediate topology. Then, for $u \in BV_{rad}(\mathbb{R}^N)$, there exists $(u_n) \subset C^\infty(\mathbb{R}^N)\cap BV_{rad}(\mathbb{R}^N)$ such that
\begin{equation}
u_n \to u \quad \in L^1(\mathbb{R}^N)
\label{Radial1}
\end{equation}
and
\begin{equation}
\int_{\mathbb{R}^N}|\nabla u_n| dx \to \int_{\mathbb{R}^N}|Du|,
\label{Radial2}
\end{equation}
as $n \to \infty$.
Denoting $v(x)=v(|x|)=v(r)$ whenever $v$ is a radial function of $\mathbb{R}^N$, we have that
$$
\frac{d}{d\rho}\left(\rho^{N-1}|u_n(\rho)|\right) = (N-1)\rho^{N-2}|u_n(\rho)| + \rho^{N-1}\frac{u_n}{|u_n|}u_n'(\rho) \quad \forall \rho > 0.
$$
Integrating both sides over $(r, +\infty)$ we have that
\begin{eqnarray*}
\int_r^{+\infty} \frac{d}{d\rho}\left(\rho^{N-1}|u_n(\rho)|\right)d\rho &= & \int_r^{+\infty}(N-1)\rho^{N-2}|u_n(\rho)|d\rho + \int_r^{+\infty}\rho^{N-1}\frac{u_n}{|u_n|}u_n'(\rho)d\rho\\
& \geq & \int_r^{+\infty}\rho^{N-1}\frac{u_n}{|u_n|}u_n'(\rho)d\rho.
\end{eqnarray*}
Note that, since $u_n$ decay at infinity and then
\begin{eqnarray*}
r^{N-1}|u_n(r)| & \leq & \int_r^{+\infty}|u_n'(\rho)|\rho^{N-1}d\rho\\
& \leq & \int_{\mathbb{R}^N} |\nabla u_n|dx.
\end{eqnarray*}
Then we have that
\begin{equation}
|u_n(r)| \leq \frac{1}{r^{N-1}}\int_{\mathbb{R}^N} |\nabla u_n|dx.
\label{Radial3}
\end{equation}
Hence by (\ref{Radial1}), (\ref{Radial2}) and (\ref{Radial3}) it follows that 
$$
|u(r)| \leq \frac{1}{r^{N-1}}\int_{\mathbb{R}^N} |Du|, \quad \mbox{a.e. in $\mathbb{R}^N$.}
$$
\end{proof}

Now let us going to the proof of the compactness result.

\begin{proof}[Proof of Theorem \ref{theorem1}]

Let $(u_n) \subset BV_{rad}(\mathbb{R}^N)$ be a bounded sequence and let $C > 0$ be such that
$$
\|u_n\| \leq C, \quad \forall n \in \mathbb{N}.
$$
By Lemma \ref{radiallemma} it follows that, for all $n \in \mathbb{N}$,
$$
|u_n(x)| \leq \frac{C}{|x|^{N-1}}\, \quad \mbox{a.e. in $\mathbb{R}^N \backslash\{0\}$.}
$$
Since $q > 1$, given $\epsilon > 0$, there exists $R > 0$ such that, for all $n \in \mathbb{N}$,
$$
|u_n(x)|^q \leq \frac{\epsilon}{2C}|u_n(x)| \quad \forall x \in B_R(0)^c.
$$
This implies that
\begin{equation}
\int_{B_R(0)^c}|u_n|^q dx \leq  \frac{\epsilon}{2C} \int_{B_R(0)^c}|u_n|dx \leq \frac{\epsilon}{2C} \|u_n\| \leq \frac{\epsilon}{2},
\label{Radial4}
\end{equation}
for all $n \in \mathbb{N}$.
Moreover, since $BV(B_R(0))$ is compactly embedded into $L^q(B_R(0))$, there exists $u \in L^q(B_R(0))$ such that, up to a subsequence $u_n \to u$ in $L^q(B_R(0))$, as $n \to \infty$. Then there exists $n_0 \in \mathbb{N}$ such that
\begin{equation}
\int_{B_R(0)}|u_n - u|^qdx < \frac{\epsilon}{2}, \quad \forall n \geq n_0.
\label{Radial5}
\end{equation}
Let us define $\overline{u}:\mathbb{R}^N \to \mathbb{R}$ as to be equal to $u$ in $B_R(0)$ and equal to $0$ in $B_R(0)^c$. Then, by (\ref{Radial4}) and (\ref{Radial5}), it follows that
\begin{eqnarray*}
\int_{\mathbb{R}^N}|u_n - \overline{u}|^qdx & = & \int_{B_R(0)}|u_n - \overline{u}|^qdx + \int_{B_R(0)^c}|u_n|^qdx\\
& < & \epsilon.
\end{eqnarray*}
Then it is clear that $u_n \to \overline{u}$ in $L^q(\mathbb{R}^N)$, as $n \to \infty$.
\end{proof}

Now let us prove the version of the Mountain Pass Theorem (Theorem \ref{mountainpass}) we are using here. 
Before to start, let us prove that condition (\ref{MP2}) is equivalent to the existence of $z_\epsilon \in E^*$ such that $\|z_\epsilon\|_* \leq \epsilon$ and
\begin{equation}
I_0(y) - I_0(x_\epsilon) \geq I'(x_\epsilon)(y - x_\epsilon) + \langle z_\epsilon,y-x_\epsilon\rangle_{E^*,E}, \quad \forall y \in E,
\label{MP3}
\end{equation}
where $\langle \cdot,\cdot \rangle_{E^*,E}$ denotes the duality pair between $E$ and its dual.

In fact, clearly (\ref{MP3}) implies (\ref{MP2}).
In order to prove that (\ref{MP2}) also imply (\ref{MP3}), let us state a lemma proved by Szulkin in \cite{Szulkin}[Lemma 1.3].

\begin{lemma}
Let $E$ be a Banach space and $\chi:E \to (-\infty,+\infty]$ a lower semicontinuous convex function with $\chi(0) = 0$. If 
$$\chi(x) \geq -\|x\|, \quad \forall x \in E,$$
then there exists $z \in E^*$, $\|z\|_* \leq 1$, such that
$$\chi(x) \geq \langle z,x \rangle_{E^*,E}, \quad \forall x \in E.$$
\label{lemmaszulkin}
\end{lemma}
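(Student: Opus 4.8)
The plan is to recognize this as a Hahn--Banach separation statement about the epigraph of $\chi$, carried out in the product Banach space $E\times\mathbb{R}$. First I would introduce the two sets
\[
A=\{(x,t)\in E\times\mathbb{R}: t<-\|x\|\},\qquad B=\mathrm{epi}(\chi)=\{(x,t)\in E\times\mathbb{R}: t\geq\chi(x)\}.
\]
Since $(x,t)\mapsto t+\|x\|$ is convex and continuous, $A$ is a nonempty open convex set; since $\chi$ is convex and lower semicontinuous, $B$ is convex, and it is nonempty because $\chi(0)=0$ gives $(0,0)\in B$. The key role of the hypothesis $\chi(x)\geq-\|x\|$ is precisely to force $A\cap B=\emptyset$: if $(x,t)\in B$ then $t\geq\chi(x)\geq-\|x\|$, so $(x,t)\notin A$.

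Next I would apply the geometric form of the Hahn--Banach theorem for an open convex set and a disjoint convex set: there exist a nonzero continuous functional on $E\times\mathbb{R}$, which we write as $(x,t)\mapsto\langle w,x\rangle+\beta t$ with $(w,\beta)\in E^*\times\mathbb{R}\setminus\{0\}$, and a scalar $\alpha$ such that $\langle w,x\rangle+\beta t\leq\alpha$ on $A$ and $\langle w,x\rangle+\beta t\geq\alpha$ on $B$. Letting $t\to+\infty$ in $B$ forces $\beta\geq0$; and if $\beta=0$ the inequality on $A$, whose projection onto $E$ is all of $E$, would force $w=0$, contradicting $(w,\beta)\neq0$. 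Hence $\beta>0$, and we may rescale so that $\beta=1$. Evaluating the two inequalities at $x=0$ --- using $(0,0)\in B$ on one side and $(0,t)\in A$ for every $t<0$ on the other --- pins down $\alpha=0$.

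Finally, with $\beta=1$ and $\alpha=0$, taking $(x,\chi(x))\in B$ whenever $\chi(x)<+\infty$ yields $\chi(x)\geq-\langle w,x\rangle=\langle z,x\rangle$ with $z:=-w$ (the inequality being trivial where $\chi=+\infty$), while letting $t\uparrow-\|x\|$ in $A$ yields $\langle w,x\rangle\leq\|x\|$ for all $x$, hence, replacing $x$ by $-x$, $|\langle w,x\rangle|\leq\|x\|$, i.e. $\|z\|_*=\|w\|_*\leq1$. This $z$ is the one asserted by the lemma. The only genuinely delicate points are the two normalizations: showing that the separating hyperplane is non-vertical (the step $\beta>0$, which exploits nontriviality of the functional together with the fact that $A$ surjects onto $E$) and extracting $\alpha=0$; both must be arranged so as to invoke $\chi(0)=0$ and $\chi\geq-\|\cdot\|$ at the right moments.

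As an alternative route that I would mention but not develop, one can observe that $g:=\chi+\|\cdot\|$ is convex, lower semicontinuous, nonnegative and vanishes at $0$, so $0\in\partial g(0)$; since $\|\cdot\|$ is continuous everywhere, the Moreau--Rockafellar sum rule applies and gives $0\in\partial\chi(0)+\partial\|\cdot\|(0)$, where $\partial\|\cdot\|(0)$ is the closed unit ball of $E^*$. This immediately produces $z\in\partial\chi(0)$ with $\|z\|_*\leq1$, and then $\chi(x)-\chi(0)\geq\langle z,x\rangle$ is exactly the conclusion. I would present the Hahn--Banach argument as the primary one, since it is self-contained.
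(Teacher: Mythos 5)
Your argument is correct. Note, however, that the paper does not actually prove this lemma at all: it is quoted verbatim from Szulkin [Lemma 1.3] and used as a black box, so there is no internal proof to compare against --- your write-up supplies the missing argument. The separation proof goes through exactly as you describe: $A$ is open convex, $B=\mathrm{epi}(\chi)$ is convex and contains $(0,0)$, the hypothesis $\chi\geq-\|\cdot\|$ gives disjointness, and the two normalizations ($\beta>0$ because $A$ projects onto all of $E$ and a linear functional bounded above is zero; $\alpha=0$ by testing with $(0,0)\in B$ and $(0,t)\in A$, $t<0$) are handled correctly, as is the norm bound $\|z\|_*\leq 1$ obtained by letting $t\uparrow-\|x\|$ and replacing $x$ by $-x$. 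One small observation: your argument never uses the lower semicontinuity of $\chi$ (the first geometric form of Hahn--Banach needs only that one of the two disjoint convex sets is open), so you are in fact proving a slightly stronger statement than the one quoted; this is harmless. Your alternative route via $0\in\partial(\chi+\|\cdot\|)(0)$ and the Moreau--Rockafellar sum rule is also valid and is essentially the same separation argument packaged as a subdifferential calculus rule; the direct Hahn--Banach version you chose as primary has the advantage of being self-contained, which is appropriate here since the point of the exercise is precisely to not rely on an external reference.
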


Now, if (\ref{MP2}) holds, then
$$\frac{1}{\epsilon}\left(I_0((y-x_\epsilon)+x_\epsilon) - I_0(x_\epsilon) - I'(x_\epsilon)(y - x_\epsilon)\right) \geq -\|y-x_\epsilon\|,$$
for all $y \in E$. By applying Lemma \ref{lemmaszulkin} to
$$\chi(x) = \frac{1}{\epsilon}\left(I_0(x + x_\epsilon) - I_0(x_\epsilon) - I'(x_\epsilon)x\right)$$
it follows that there exists $z \in E^*$, such that $\|z\|_* \leq 1$ and
$$\chi(x) \geq \langle z,x \rangle_* \quad \forall x \in E.$$
Taking $z_\epsilon = \epsilon z$ and $x = y-x_\epsilon$ where $y \in E$, it follows (\ref{MP3}) for $z_\epsilon$ and $\|z_\epsilon\|_* \leq \epsilon$.

To proceed with the proof, we need a version of Deformation Lemma without the Palais-Smale condition which has been proved in \cite{FigueiredoPimenta}[Theorem 4]. By the sake of completeness we state and prove it again here.
\begin{theorem}[Deformation lemma]

Let $E$ be a Banach space and $T:E \to \mathbb{R}$ a locally Lipschitz functional. When $a \in \mathbb{R}$, let us denote $T_a = \{x \in E; \, T(x) \leq a\}$. If there exist $d \in \mathbb{R}$, $S \subset E$ and $\alpha, \delta, \epsilon_0 > 0$ such that 
$$\beta(x):= \min\{\|z\|_{E^*}; \, z \in \partial T(x)\} \geq \alpha, \quad \forall x \in T^{-1}([d-\epsilon_0,d+\epsilon_0]) \cap S_{2\delta},$$
where $S_{2\delta}$ is a $2\delta-$neighborhood of $S$, then for $0 < \epsilon < \min\left\{\frac{\delta\alpha}{2},\epsilon_0\right\}$, there exists an homeomorphism $\eta:E \to E$ such that
\begin{itemize}
\item [$i)$] $\eta(x) = x$ for all $x \not \in  T^{-1}([d-\epsilon_0,d+\epsilon_0])\cap S_{2\delta}$;
\item [$ii)$] $\eta(T_{d+\epsilon}\cap S) \subset T_{d-\epsilon}$;
\item [$iii)$] $T(\eta(x)) \leq T(x)$, for all $x \in E$.
\end{itemize}
\label{deformationlemma}
\end{theorem}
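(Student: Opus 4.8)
The plan is to adapt the classical pseudo-gradient flow proof of the Deformation Lemma to the present setting, whose two non-standard features --- that $T$ is only locally Lipschitz (so one works with Clarke's generalized gradient $\partial T$ rather than a differential) and that $E$ need not be Hilbert (so no gradient vector field is available) --- dictate the whole structure. Throughout, fix $d$, $S$, $\alpha$, $\delta$, $\epsilon_0$ as in the hypothesis, put $A := T^{-1}([d-\epsilon_0,d+\epsilon_0])\cap S_{2\delta}$ (the closed band on which $\beta\geq\alpha$) together with its open sub-band $\mathcal{O} := T^{-1}((d-\epsilon_0,d+\epsilon_0))\cap S_{2\delta}$, and fix $\epsilon$ with $0<\epsilon<\min\{\delta\alpha/2,\epsilon_0\}$. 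The idea is to build a locally Lipschitz, bounded vector field $W$ on $E$, supported inside $A$, along which $T$ strictly decreases at a rate comparable to $\alpha$ on the portion where $T\in[d-\epsilon,d+\epsilon]$ and $\dist(\cdot,S)\leq\delta$; integrating $W$ for an appropriate finite time $T^\ast$ produces the homeomorphism $\eta$.

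First I would construct the pseudo-gradient direction pointwise. For $x\in\mathcal{O}$ the set $\partial T(x)\subset E^\ast$ is convex and weak$^\ast$-compact with $\min\{\|z\|_{E^\ast}:z\in\partial T(x)\}=\beta(x)\geq\alpha$; applying a minimax theorem (Sion's, say) to the bilinear map $(z,v)\mapsto\langle z,v\rangle$ on $\partial T(x)\times\overline{B_E(0,1)}$ --- legitimate because $\partial T(x)$ is weak$^\ast$-compact --- gives $\sup_{\|v\|\leq 1}\min_{z\in\partial T(x)}\langle z,v\rangle=\beta(x)\geq\alpha$, so for a fixed small $\tau\in(0,1)$ there is $v_x\in E$ with $\|v_x\|\leq 1$ and $\langle z,v_x\rangle>(1-\tau)\alpha$ for every $z\in\partial T(x)$. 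By weak$^\ast$-upper semicontinuity of $x\mapsto\partial T(x)$ this inequality survives on a neighborhood of $x$; covering the metric space $\mathcal{O}$ by such neighborhoods, passing to a locally finite refinement and gluing by a locally Lipschitz partition of unity, I obtain a locally Lipschitz field $V:\mathcal{O}\to E$ with $\|V(x)\|\leq 1$ and $\langle z,V(x)\rangle>(1-\tau)\alpha$ for all $x\in\mathcal{O}$ and $z\in\partial T(x)$. I then choose $\tau$ small enough that $T^\ast:=\dfrac{2\epsilon}{(1-\tau)\alpha}<\delta$, which is possible precisely because $\epsilon<\delta\alpha/2$.

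Next I cut off and flow. Pick Lipschitz $\phi,\rho:\mathbb{R}\to[0,1]$ with $\phi\equiv 1$ on $[d-\epsilon,d+\epsilon]$ and $\mathrm{supp}\,\phi$ a compact subset of $(d-\epsilon_0,d+\epsilon_0)$ (here $\epsilon<\epsilon_0$ is used), and $\rho\equiv 1$ on $[0,\delta]$ with $\mathrm{supp}\,\rho\subset[0,2\delta)$, and set $g(x):=\phi(T(x))\,\rho(\dist(x,S))$: this $g$ is locally Lipschitz, equals $1$ on $B:=T^{-1}([d-\epsilon,d+\epsilon])\cap\{\dist(\cdot,S)\leq\delta\}$, and $\overline{\{g>0\}}\subset\mathcal{O}$. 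Define $W:=-gV$ on $\mathcal{O}$ and $W:=0$ off $\{g>0\}$; then $W$ is a globally defined, locally Lipschitz, bounded ($\|W\|\leq 1$) field, so $\dot\sigma=W(\sigma)$, $\sigma(0,\cdot)=\mathrm{id}$, has a unique globally defined flow and $\eta:=\sigma(T^\ast,\cdot)$ is a homeomorphism with inverse $\sigma(-T^\ast,\cdot)$. Property $(i)$ is then immediate since $W$ vanishes on a neighborhood of any $x\notin A$. For $(iii)$, $t\mapsto T(\sigma(t,x))$ is locally Lipschitz and, by the standard chain rule for the generalized directional derivative, $\tfrac{d}{dt}T(\sigma(t,x))\leq T^{0}(\sigma;\,-g(\sigma)V(\sigma))=-g(\sigma)\min_{z\in\partial T(\sigma)}\langle z,V(\sigma)\rangle\leq -g(\sigma)(1-\tau)\alpha\leq 0$ for a.e.\ $t$, so $T\circ\sigma(\cdot,x)$ is nonincreasing. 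For $(ii)$, let $x\in T_{d+\epsilon}\cap S$: if $T(\sigma(t_1,x))\leq d-\epsilon$ for some $t_1\in[0,T^\ast]$ we conclude by monotonicity; otherwise $d-\epsilon<T(\sigma(t,x))\leq d+\epsilon$ on $[0,T^\ast]$, and since $\|\sigma(t,x)-x\|\leq t\leq T^\ast<\delta$ with $x\in S$ we get $\sigma(t,x)\in B$, hence $g(\sigma(t,x))\equiv 1$; integrating the descent estimate gives $T(\eta(x))\leq (d+\epsilon)-(1-\tau)\alpha T^\ast=d-\epsilon$.

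The main obstacle is the pseudo-gradient step: from the single scalar estimate $\beta\geq\alpha$ one must extract a single direction that decreases $T$ uniformly against the entire set $\partial T(x)$, and then glue these directions into one locally Lipschitz field --- this is where weak$^\ast$-compactness of $\partial T(x)$, the minimax theorem, and the weak$^\ast$-upper semicontinuity of $\partial T$ all enter, and it is also where one must keep the descent rate close enough to $\alpha$ (i.e.\ choose $\tau$ small) so that the required flow time $T^\ast=2\epsilon/((1-\tau)\alpha)$ still satisfies $T^\ast<\delta$, which is exactly the point at which the hypothesis $\epsilon<\delta\alpha/2$ is consumed. The remaining ingredients --- existence and uniqueness of the flow of a bounded locally Lipschitz field and the chain-rule inequality $\tfrac{d}{dt}(T\circ\sigma)\leq T^{0}(\sigma;\dot\sigma)$ along $C^1$ curves --- are standard facts of Clarke's calculus and of ODEs in Banach spaces.
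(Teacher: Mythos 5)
Your proof is correct and follows essentially the same route as the paper's: a locally Lipschitz pseudo-gradient field for the Clarke subdifferential, cut off between the inner band $T^{-1}([d-\epsilon,d+\epsilon])\cap S_\delta$ and the outer band $T^{-1}([d-\epsilon_0,d+\epsilon_0])\cap S_{2\delta}$, integrated for a fixed time less than $\delta$, with $(i)$--$(iii)$ obtained exactly as you do. The only real difference is that you build the pseudo-gradient field from scratch (weak$^*$-compactness of $\partial T(x)$, a minimax argument, and a locally Lipschitz partition of unity) with descent constant $(1-\tau)\alpha$, whereas the paper simply invokes Chang's Lemma 3.3 with constant $\alpha/2$; your sharper constant is in fact what makes the final estimate land exactly at $d-\epsilon$ rather than merely at $d$.
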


\begin{proof}[Proof of Theorem \ref{deformationlemma}]
To start with, under these assumptions, let us recall Lemma 3.3 of \cite{Chang}, which states the existence of a {\it psudo-gradient vector field} for $T$, given by a locally Lipschitz vector field $g: T^{-1}([d-\epsilon_0,d+\epsilon_0])\cap S_{2\delta} \to E$ satisfying
\begin{equation}
\|g(x)\| < 1
\label{vectorfield1}
\end{equation}
and 
\begin{equation}
\langle z^*,g(x)\rangle_{E^*,E} > \frac{\alpha}{2}, \quad \forall z^* \in \partial T(x).
\label{vectorfield2}
\end{equation}

For
\begin{equation}
0 < \epsilon < \min\left\{\frac{\delta\alpha}{2},\epsilon_0\right\},
\label{epsilon}
\end{equation}
define
$$A = T^{-1}([d-\epsilon_0,d+\epsilon_0]) \cap S_{2\delta},$$
$$B = T^{-1}([d-\epsilon,d+\epsilon]) \cap S_\delta$$
and note that $B \subset A$. Define 
$$\psi(x) = \frac{d(x,E\backslash A)}{d(x,E\backslash A) + d(x,B)}$$
and note that $\psi$ is a locally Lipschitz continuous function such that $0 \leq \psi \leq 1$ and
$$
\psi(x) = \left\{
\begin{array}{ll}
1 & \mbox{if $x \in B$,}\\
0 & \mbox{if $x \in E\backslash A$.}\\
\end{array} \right.
$$
Now consider $V(x) = \psi(x)g(x)$ which is also locally Lipschitz continuous and $\sigma(t,x)$ the solution of
$$
\left\{
\begin{array}{rl}
\displaystyle \frac{d}{dt}\sigma(t,x) & = -V(\sigma(t,x)), \quad t > 0,\\
\sigma(0,x) & = x,
\end{array}\right.
$$
which is continuous in $\mathbb{R}_+ \times E$.

Let us choose
\begin{equation}
t_0 \in \left(\frac{2\epsilon}{\alpha}, \delta\right)
\label{t_0}
\end{equation}
and define 
$$
\eta(x) = \sigma(t_0,x), \quad x \in E.
$$

Note that since $V \equiv 0$ in $E \backslash (T^{-1}([d-\epsilon_0,d+\epsilon_0]) \cap S_{2\delta}$, it follows that $i)$ holds.

To prove $ii)$, let us first recall Proposition 9 in \cite{Chang} which implies that $t \mapsto T(\sigma(t,x))$ is a.e. differentiable, for each $x \in E$. Moreover, we have that
\begin{equation}
\begin{array}{lll}
\displaystyle \frac{d}{dt}T(\sigma(t,x)) & \leq & \max\left\{\left< z^*,\frac{d}{dt}\sigma(t,x)\right>_{E^*,E}; \, z^* \in \partial T(\sigma(x,t))\right\}\\
& = & \displaystyle  -\min\{\left< z^*,V(\sigma(t,x))\right>; \, z^* \in \partial T(\sigma(x,t))\}\\
& \leq & \displaystyle  \left\{
\begin{array}{rl}
\displaystyle  - \frac{\alpha}{2} & \mbox{if $\sigma(t,x)\subset T^{-1}([d-\epsilon,d+\epsilon])\cap S_\delta$}\\
0 & \mbox{otherwise,}
\end{array}\right.
\end{array}
\label{estimativadeformacao}
\end{equation}
where we use (\ref{vectorfield2}) in the last estimate. Then the function $t \mapsto T(\sigma(t,x))$ is nonincreasing, for all $x \in E$ and then we get $iii)$.

Note also that, for all $t > 0$
\begin{eqnarray*}
\|\sigma(t,x) - x\| & = & \|\sigma(t,x) - \sigma(0,x)\|\\
& = & \left\| \int_0^t \frac{d}{ds}\sigma(s,x)ds\right\|\\
& \leq & \int_0^t \|V(\sigma(s,x))\|ds\\
& \leq & t.
\end{eqnarray*}

Let us take $x \in T_{d+\epsilon} \cap S$. If there exists some $t \in \left[0,t_0\right]$ such that $T(\sigma(t,x)) < d-\epsilon$, then $T(\sigma(t_0,x)) < d-\epsilon$ and $ii)$ is satisfied by $\eta$. Then suppose that
$$\sigma(t,x) \in T^{-1}([d-\epsilon,d+\epsilon]), \forall t \in [0,t_0]$$
and let us prove that $\sigma(t,x) \subset S_\delta$, $\forall t \in [0,t_0]$. In fact, note that
$$\|\sigma(t,x) - x\| \leq t \leq t_0 < \delta, \quad \forall t \in [0,t_0].$$

Hence, since $\sigma([0,t_0],x) \subset T^{-1}([d-\epsilon,d+\epsilon]) \cap S_\delta$, it follows by (\ref{t_0}) and (\ref{estimativadeformacao}) that

\begin{eqnarray*}
T(\eta(x))& = & T(\sigma(t_0,x))\\
& = & T(x) + \int_0^{t_0} \frac{d}{dt}T(\sigma(s,x))dx\\
& \leq & T(x) - \frac{\alpha}{2}t_0\\
& < & d -\epsilon
\end{eqnarray*}
and $ii)$ follows.
\end{proof}

Now, finally, let us proceed with the proof of Theorem \ref{mountainpass}.
\begin{proof}[Proof of Theorem \ref{mountainpass}]
First, note that since $\Phi(e) < \Phi(0) < \alpha \leq \Phi\left|_{\partial B_\rho}\right.$, then 
$$c \geq \alpha.$$

Suppose by contradiction that there exists $\epsilon > 0$, which can be assumed to satisfy 
$$c - \epsilon > \Phi(0),$$
such that for all $x \in \Phi^{-1}([c-\epsilon,c+\epsilon])$ where $c$ is defined in (\ref{minimax}), (\ref{MP2}) does not hold with $x_\epsilon = x$. Since (\ref{MP2}) is equivalent to (\ref{MP3}), this implies that for all $z \in E^*$ such that $\|z\|_* \leq \epsilon$, there exists $y_\epsilon \in X$, such that
$$
I_0(y_\epsilon) - I_0(x) < I'(x)(y_\epsilon - x)dx + \langle z_\epsilon,y_\epsilon-x\rangle_*.
$$
Hence, it follows that 
$$\beta(x) \geq \epsilon, \quad \forall x \in \Phi^{-1}([c-\epsilon,c+\epsilon]),$$
where $\beta(x) = \inf\{\|w^*\|_*; \, \, w^* \in \partial I_0(u) - I'(x)\}$.

By Theorem \ref{deformationlemma} applied to $T = \Phi$, $d=c$, $\alpha = \epsilon$ and $\epsilon_0 = \epsilon$ it follows that there exists an homeomorphism $\eta: E \to E$ and $\bar{\epsilon} \in (0,\epsilon)$ such that 
\begin{itemize}
\item [$i)$] $\eta(x) = x$ for all $x \not \in  \Phi^{-1}([c-\epsilon,c+\epsilon])$;
\item [$ii)$] $\eta(\Phi_{c+\bar{\epsilon}}) \subset \Phi_{c-\bar{\epsilon}}$.
\end{itemize}

By the definition of $c$, there exists $\gamma \in \Gamma$ such that
$$c \leq \max_{t \in [0,1]}\Phi(\gamma(t)) \leq c+\overline\epsilon.$$

Let us consider $\tilde{\gamma}(t) = \eta(\gamma(t))$ and note that, since $\Phi(0), \Phi(e) < c-\epsilon$, $i)$ implies that $\tilde{\gamma} \in \Gamma$.
Then, $ii)$ implies that
$$c \leq \max_{t\in[0,1]}\Phi(\tilde{\gamma}(t)) \leq c - \overline{\epsilon},$$
which is a contradiction. Then the result follows.

\end{proof}

To end up this section, let us present the proof of the Lions' type result.

\begin{proof}[Proof of Lemma \ref{lionslemma}]
Let $q < s < 1^*$ and $u \in BV(\mathbb{R}^N)$. Since $BV(\mathbb{R}^N) \hookrightarrow L^r(\mathbb{R}^N)$ for $1 \leq r < 1^*$, then $u \in L^q(\mathbb{R}^N)$ and $u \in L^{1^*}(\mathbb{R}^N)$. 

For $R > 0$, by interpolation inequality with $\displaystyle \theta = \frac{s-q}{1^* - q}\frac{1^*}{s}$, it follows that $0 < \theta < 1$ and
\begin{eqnarray*}
|u|_{L^s(B_R(y))} & \leq & |u|_{L^q(B_R(y))}^{1-\theta}|u|_{L^{1^*}(B_R(y))}^\theta\\
& \leq & c|u|_{L^q(B_R(y))}^{1-\theta}\|u\|_{BV(B_R(y))}^\theta.
\end{eqnarray*}
Covering $\mathbb{R}^N$ by balls of radius $R$ and center in $(y_n)$ in such a way that each point in $\mathbb{R}^N$ belongs to at maximum $N+1$ balls, we have that
\begin{eqnarray*}
\int_{\mathbb{R}^N}|u|^sdx & \leq & \sum_{n = 1}^{+\infty}\int_{B_R(y_n)}|u|^sdx\\
& \leq & c\sum_{n = 1}^{+\infty}|u|_{L^q(B_R(y_n))}^{(1-\theta)s}\|u\|_{BV(B_R(y_n))}^{\theta s}\\
& \leq & c\left(\sup_{y \in \mathbb{R}^N}\int_{B_R(y)}|u|^qdx \right)^\frac{(1-\theta)s}{q} \lim_{k \to +\infty}\sum_{n = 1}^k\left(\int_{B_R(y_n)}|Du| + \int_{B_R(y_n)}|u|dx\right)^{\theta s}\\
& = & c\left(\sup_{y \in \mathbb{R}^N}\int_{B_R(y)}|u|^qdx \right)^\frac{(1-\theta)s}{q} \lim_{k \to +\infty}\sum_{n = 1}^k\left(\int_{\mathbb{R}^N}\chi_{B_R(y_n)}|Du| + \int_{\mathbb{R}^N}\chi_{B_R(y_n)}|u|dx\right)^{\theta s}\\
& = & c\left(\sup_{y \in \mathbb{R}^N}\int_{B_R(y)}|u|^qdx \right)^\frac{(1-\theta)s}{q} \lim_{k \to +\infty}\left(\int_{\mathbb{R}^N}\sum_{n = 1}^k\chi_{B_R(y_n)}|Du| + \int_{\mathbb{R}^N}\sum_{n = 1}^k \chi_{B_R(y_n)}|u|dx\right)^{\theta s}\\
& \leq & c\left(\sup_{y \in \mathbb{R}^N}\int_{B_R(y)}|u|^qdx \right)^\frac{(1-\theta)s}{q}(N+1)\|u\|^{\theta s}.
\end{eqnarray*}

Then, since $(u_n)$ is bounded in $BV(\mathbb{R}^N)$, by the last inequality and the hypothesis, it follows that
\begin{equation}
u_n \to 0, \quad \mbox{in $L^s(\mathbb{R}^N)$,}
\label{Lions1}
\end{equation}
for all $ q < s < 1^*$.

Then, if $q = 1$ we are done. Otherwise, if $1 < q < 1^*$, let us consider $1 < s \leq q$ and take $s_0 \in (q,1^*)$ in such a way that (\ref{Lions1}) holds.
Note that $u \in L^1(\mathbb{R}^N)\cap L^{s_0}(\mathbb{R}^N)$ and, since $s \in (1,s_0)$, by doing 
$$\theta = \frac{s_0 - s}{s(s_0 - 1)}$$
we have that
$$
\frac{1}{s} = \frac{\theta}{1} + \frac{1-\theta}{s_0} \quad \mbox{and} \quad 0 < \theta < 1.
$$
Then, again by interpolation inequality, the embedding of $BV(\mathbb{R}^N)$ and (\ref{Lions1}), it follows that
$$
|u_n|_s \leq |u_n|_1|u_n|_{s_0} \leq \|u_n\||u_n|_{s_0} \to 0
$$
as $n \to \infty$, since $(u_n)$ is bounded in $BV(\mathbb{R}^N)$. This completes the proof.
\end{proof}

\section{Application}

In this section we present an application of Theorem \ref{theorem1} to the following problem
\begin{equation}
\left\{
\begin{array}{rl}
\displaystyle - \Delta_1 u + \frac{u}{|u|} & = f(u) \quad \mbox{in
$\mathbb{R}^N$,}\\
u \in BV(\mathbb{R}^N),
\end{array} \right.
\label{P1}
\end{equation}
where $f$ satisfies the conditions $(f_1) - (f_5)$.

Since (\ref{P1}) is variational, let us define the energy functional associated to it, $\Phi: BV(\mathbb{R}^N) \to \mathbb{R}$ given by
\begin{equation}
\Phi(u) = \mathcal{J}(u) - \mathcal{F}(u),
\label{Phi}
\end{equation}
where $\mathcal{F}: BV(\mathbb{R}^N) \to \mathbb{R}$ is defined by
\begin{equation}
\mathcal{F}(u) = \int_{\mathbb{R}^N}F(u)dx
\label{F}
\end{equation}
and $\mathcal{J}$ is given by (\ref{J}).

It is a simple matter to prove that $\mathcal{F}$ is a smooth functional. Moreover, by (\ref{Jlinha}), $\mathcal{J}'(u)u = \mathcal{J}(u)$ for all $u \in BV(\mathbb{R}^N)$. Then, the directionals derivatives $\Phi'(u)u$ exists and
\begin{equation}
\Phi'(u)u = \mathcal{J}(u) - \int_{\mathbb{R}^N}f(u)udx.
\label{Phiderivada}
\end{equation}

Before we start to deal with this equation, let us make precise the sense of solution we are considering here. Since $\Phi$ can we written as the difference between a Lipschitz and a smooth functional in $BV(\mathbb{R}^N)$, we say that $u_0 \in BV(\mathbb{R}^N)$ is a solution of (\ref{P1}) if $0 \in \partial \Phi(u_0)$, where $\partial \Phi(u_0)$ denotes the generalized gradient of $\Phi$ in $u_0$, as defined in \cite{Chang}. It follows that this is equivalent to $\mathcal{F}'(u_0) \in \partial \mathcal{J}(u_0)$ and, since $\mathcal{J}$ is convex, this can be written as
\begin{equation}
\mathcal{J}(v) - \mathcal{J}(u_0) \geq \mathcal{F}'(u_0)(v-u_0), \quad \forall v \in BV(\mathbb{R}^N).
\label{BVsolution}
\end{equation}
Hence all $u_0 \in BV(\mathbb{R}^N)$ such that (\ref{BVsolution}) holds is going to be called a bounded variation solution of (\ref{P1}). 

\begin{proof}[Proof of Theorem \ref{theoremapplication}]
First of all let us prove that the restriction of $\Phi$ to the Banach space $BV_{rad}(\mathbb{R}^N)$ satisfies condition $i)$ of the Mountain Pass Theorem. But before that, just note that by $(f_2)$ and $(f_3)$ it follows that for all $\eta > 0$, there exists $A_\eta > 0$ such that
\begin{equation}
|F(s)| \leq \eta|s| + A_\eta|s|^p, \quad \forall s \in \mathbb{R}.
\label{fepsilon}
\end{equation}

Note that, by (\ref{fepsilon}) and the embeddings of $BV(\mathbb{R}^N)$, it follows that
\begin{eqnarray*}
\Phi(u)& = &\int_{\mathbb{R}^N} |Du| + \int_{\mathbb{R}^N} |u| dx - \int_{\mathbb{R}^N} F(u)dx\\
& \geq & \|u\| - \eta |u|_1 - A_\eta|u|_p^p\\
& \geq & \|u\| - \eta \|u\| - c_3\|u\|^p\\
& = & \|u\|\left(1 - \eta - c_3\|u\|^{p-1}\right)\\
& \geq & \alpha,
\end{eqnarray*}
for all $u \in BV_{rad}(\mathbb{R}^N)$, such that $\|u\| = \rho$, where $0 < \eta < 1$ is fixed, $\displaystyle 0 < \rho < \left(\frac{1-\eta}{c_3}\right)^\frac{1}{p-1}$ and $\displaystyle \alpha = \rho(1 - \eta - c_3\rho^{p-1})$.

Now let us prove that $\Phi$ satisfies the condition $ii)$ of Theorem \ref{mountainpass}.
First note that condition $(f_4)$ implies that there exists constants $d_1,d_2 > 0$ such that
\begin{equation}
F(s) \geq d_1|s|^\theta - d_2 ,\quad \forall s \in \mathbb{R}.
\label{Festimate}
\end{equation}

Let $u \in BV_{rad}(\mathbb{R}^N)$, with compact support, $u \neq 0$ and let $t > 0$. Then
$$
\Phi(tu) \leq t\|u\| - d_1t^\theta |u|_\theta^\theta + d_2|\mbox{supp}(u)| \to -\infty,
$$
as $t \to +\infty$, since $\theta > 1$.

Then, Theorem \ref{mountainpass} implies that, given a sequence $\epsilon_n \to 0$, there exists $(u_n)\subset BV_{rad}(\mathbb{R}^N)$ such that 
\begin{equation}
\lim_{n \to \infty}\Phi(u_n) = c
\label{mountainpasssequence1}
\end{equation}
and
\begin{equation}
\mathcal{J}(v) - \mathcal{J}(u_n) \geq \int_{\mathbb{R}^N}f(u_n)(v - u_n)dx - \epsilon_n\|v - u_n\|, \quad \forall v \in BV_{rad}(\mathbb{R}^N).
\label{mountainpasssequence}
\end{equation}

Let us prove that the sequence $(u_n)$ is bounded in $BV_{rad}(\mathbb{R}^N)$.
In (\ref{mountainpasssequence}) with $I_0 = \mathcal{J}$ and $I = \mathcal{F}$, let us take as test function $v = 2u_n$ and note that
$$
\|u_n\| \geq \int_{\mathbb{R}^N}f(u_n)u_ndx - \epsilon_n\|u_n\|,
$$
which implies that
\begin{equation}
(1 + \epsilon_n)\|u_n\| \geq \int_{\mathbb{R}^N}f(u_n)u_ndx.
\label{eqn1}
\end{equation}
Then, by $(f_4)$ and (\ref{eqn1}), note that
\begin{eqnarray*}
c + o_n(1) & \geq & \Phi(u_n)\\
& = & \|u_n\| + \int_{\mathbb{R}^N}\left(\frac{1}{\theta}f(u_n)u_n - F(u_n)\right)dx - \int_{\mathbb{R}^N}\frac{1}{\theta}f(u_n)u_ndx\\
& \geq & \|u_n\|\left(1- \frac{1}{\theta} - \frac{\epsilon_n}{\theta}\right)\\
& \geq & C\|u_n\|,
\end{eqnarray*}
for some $C > 0$ uniform in $n \in \mathbb{N}$. Then it follows that $(u_n)$ is bounded.

By the boundedness of $(u_n) \subset BV_{rad}(\mathbb{R}^N)$ and Theorem \ref{theorem1}, it follows that there exists $u \in BV_{rad}(\mathbb{R}^N)$ such that $u_n \to u$ in $L^r(\mathbb{R}^N)$ for all such $1 < r < 1^*$. Note that the limit function in $L^r(\mathbb{R}^N)$ is in fact a function of $BV_{rad}(\mathbb{R}^N)$ (see \cite{Giusti}[Theorem 1.9]).
By Lebesgue Dominated Convergence Theorem, $(f_3)$, the lower semicontinuity of $\mathcal{J}$ with respect to the $L^r(\mathbb{R}^N)$ convergence and the boundedness of $(u_n)$ in $BV_{rad}(\mathbb{R}^N)$, it follows calculating the $\limsup$ both sides of (\ref{mountainpasssequence}) that
\begin{equation}
\mathcal{J}(v) - \mathcal{J}(u) \geq \int_{\mathbb{R}^N} f(u)(v-u), \quad \forall v \in BV_{rad}(\mathbb{R}^N).
\label{solution}
\end{equation}

Note that up to now, we cannot guarantee that the limit function $u$ is in fact nontrivial. In order to do so, let us prove the following claim.

\noindent {\bf Claim.} There exist $\beta, r > 0$ and $(y_n) \subset \mathbb{R}^N$ such that 
\begin{equation}
\liminf_{n \to \infty} \int_{B_r(y_n)}|u_n|dx \geq \beta > 0.
\label{claim}
\end{equation}
In fact, on the contrary, up to a subsequence, we would have that for all $R > 0$,
$$
\lim_{n \to \infty} \sup_{y \in \mathbb{R}^N}\int_{B_r(y)}|u_n|dx = 0.
$$
Then, by our version of Lions' Lemma, Theorem \ref{lionslemma}, it would follows that 
\begin{equation}
u_n \to 0 \quad \mbox{in $L^s(\mathbb{R}^N)$ for all $s \in (1,1^*)$.}
\label{claim1}
\end{equation}

Now, note that by taking $v = 0$ in (\ref{mountainpasssequence}), by (\ref{Phiderivada}), it follows that $\Phi'(u_n)u_n \leq \epsilon_n\|u_n\| = o_n(1)$. Then
\begin{equation}
\Phi(u_n) - \Phi'(u_n)u_n = c + o_n(1)
\label{claim2}
\end{equation}
Note also that, since $\mathcal{J}'(u_n)u_n = \mathcal{J}(u_n)$, it follows by (\ref{fepsilon}) and the analogous of it to $f$, for all $\eta > 0$
\begin{eqnarray*}
\Phi(u_n) - \Phi'(u_n)u_n & = & \int_{\mathbb{R}^N}\left(f(u_n)u_n - F(u_n)\right)dx\\
& \leq & C\eta|u_n|_1 + C|u_n|_p^p\\
& \leq & C'\eta + o_n(1),
\end{eqnarray*}
which contradicts (\ref{claim2}) and prove the claim.

Now, by considering $\overline{u}_n(x):= u_n(x + y_n)$, since $\Phi$ is invariant by translation, we get a sequence $(\overline{u_n})$ which satisfies (\ref{mountainpasssequence1}) and (\ref{mountainpasssequence}) and moreover $\overline{u}_n \to \overline{u}$ in $L^r(\mathbb{R}^N)$, for all $r \in (1,1^*)$, where $\overline{u} \neq 0$, and $\overline{u}$ satisfies (\ref{solution}), i.e., is a bounded variation solution of (\ref{P1}). 

Now, in order to prove that $\overline{u}$ satisfies (\ref{solution}) for all $v \in BV(\mathbb{R}^N)$, we need a sort of Symmetric Criticality Principle of Palais (called here as SCPP), which is well known to hold for smooth functionals. Since here we are dealing with a functional which is given as the difference between a locally Lipschitz and a smooth functional, the classical version of SCPP cannot be used. This in fact is a great field of research and there are some papers dealing with the extension of this principle to functionals which are not smooth (see \cite{Otani} and \cite{Alexandru} for example). Our problem here is also worsened by the fact that $BV(\mathbb{R}^N)$ is not a reflexive space, property which becomes more simple the proof of the version of SCPP to a non-smooth setting. Fortunately, in \cite{Squassina}, Squassina succeed in proving a version of SCPP in a situation which comprises exactly our situation. Hence, by \cite{Squassina}[Theorem 4], it follows that $\overline{u}$ satisfies (\ref{solution}) for all $v \in BV(\mathbb{R}^N)$ and then is a nontrivial bounded variation solution of (\ref{P1}).

Now, what is left to justify is just that the solution $\overline{u}$ in fact is a ground-state solution, i.e., that $\overline{u}$ has the lowest energy level among all nontrivial bounded variation solutions. In order to prove it, we have to recall \cite{FigueiredoPimenta}, where is proved that we can define the Nehari set associated to $\Phi$, given by
$$
\mathcal{N} = \left\{u \in BV(\mathbb{R}^N)\backslash\{0\}; \, \int_{\mathbb{R}^N}|Du| + \int_{\mathbb{R}^N}|u|dx = \int_{\mathbb{R}^N}f(u)udx   \right\}.
$$
It is proven in \cite{FigueiredoPimenta} that $\mathcal{N}$ is a set which contains all nontrivial bounded variation solutions of (\ref{P1}). Then, if we manage to prove that the solution $\overline{u}$ is such that $\Phi(\overline{u}) = \inf_{\mathcal{N}}\Phi$, then $\overline{u}$ would be a ground-state solution of (\ref{P1}).

By using the same kind of arguments that Rabinowitz in \cite{Rabinowitz}, in the light of $(f_1) - (f_5)$, one can easily see that $\mathcal{N}$ is radially homeomorphic to the unit sphere in $BV(\mathbb{R}^N)$ and also that the minimax level $c$ satisfies
$$
c = \inf_{v \in BV(\mathbb{R}^N)\backslash\{0\}}\max_{t \geq 0} \Phi(tv) = \inf_{v \in \mathcal{N}}\Phi(v).
$$
Since the solution $\overline{u}$ is such that $\Phi(\overline{u}) = c$, it follows that $u$ is a ground-state bounded variation solution.

\end{proof}

\end{document}